\newtheorem{lemma}{Lemma}[section]
\newtheorem{theorem}[lemma]{Theorem}
\theoremstyle{definition} 
\newcommand{\Nat}{{\mathbb N}}
\newcommand\reals{{\mathbb R}}
\begin{document}

\title{Handelman's theorem for an order unit normed space}

\author{\textsc{David J. Foulis}\\[2mm]
{\small Department of Mathematics and Statistics,
University of Massachusetts, }\\
{\small 1 Sutton Court, MA 01002, Amherst, USA}\\[2.5mm]
\textsc{Sylvia Pulmannov\'{a}}\\[2mm]
{\small Mathematical Institute, Slovak Academy of Sciences,}\\
{\small \v Stef\'anikova 49, SK-814 73 Bratislava, Slovakia}\\
}

\date{ }

\maketitle

\begin{abstract}
We give a detailed proof D. Handelman's theorem stating (in the
context of an order unit normed space) that a monotone $\sigma$-complete
order unit normed space is a Banach space.
\end{abstract}

\section{Handelman's theorem}

In what follows, \emph{$(V,u)$ denotes an order unit normed space}. By
definition, $V$ is \emph{monotone $\sigma$-complete} iff every
ascending sequence in $V$ that is bounded above has a supremum in $V$.
Our proof of the following theorem is based on the proof of \cite
[Proposition 3.9]{Hand}.

\begin{theorem} {{\rm[Handelman]}}
If $(V,u)$ is monotone $\sigma$-complete, than $V$ is norm complete
{\rm(}i.e., a Banach space{\rm)}.
\end{theorem}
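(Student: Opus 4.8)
The plan is to use the standard reduction for completeness: $V$ is norm complete iff every Cauchy sequence converges, and since every Cauchy sequence has a subsequence $(x_n)$ with $\|x_{n+1}-x_n\|<2^{-n}$ for all $n$, and a Cauchy sequence converges as soon as one of its subsequences does, it suffices to prove that such a sequence $(x_n)$ converges. Throughout I would use the elementary fact that in an order unit normed space $\|v\|\le\lambda$ iff $-\lambda u\le v\le\lambda u$ (the nontrivial direction follows from the archimedean property); in particular $-2^{-n}u\le x_{n+1}-x_n\le 2^{-n}u$ for each $n$, and $\|u\|=1$.

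The key idea is to perturb $(x_n)$ by a norm-null sequence of scalar multiples of $u$ so as to make it \emph{monotone}. I would set $z_n:=x_n-2^{1-n}u$. A short computation gives $z_{n+1}-z_n=(x_{n+1}-x_n)+2^{-n}u$, and from $-2^{-n}u\le x_{n+1}-x_n\le 2^{-n}u$ one reads off $0\le z_{n+1}-z_n\le 2^{1-n}u$; thus $(z_n)$ is ascending. It is bounded above because $x_n=x_1+\sum_{j=1}^{n-1}(x_{j+1}-x_j)\le x_1+\sum_{j=1}^{n-1}2^{-j}u\le x_1+u$, whence $z_n\le x_n\le x_1+u$ for all $n$. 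By monotone $\sigma$-completeness, $z:=\sup_n z_n$ exists in $V$.

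The remaining, and only genuinely delicate, step is to upgrade the order-theoretic convergence $z_n\uparrow z$ to norm convergence $\|z_n-z\|\to 0$: in general a bounded monotone sequence need not converge in norm to its supremum, and what makes it work here is the geometric rate of the perturbation. For $m\ge n$, telescoping and the bound $z_{j+1}-z_j\le 2^{1-j}u$ give $0\le z_m-z_n=\sum_{j=n}^{m-1}(z_{j+1}-z_j)\le\big(\sum_{j\ge n}2^{1-j}\big)u=2^{2-n}u$, so $2^{2-n}u$ is an upper bound for $\{z_m-z_n:m\ge n\}$; since $z-z_n=\sup_{m\ge n}(z_m-z_n)$ and $z-z_n\ge 0$, we get $0\le z-z_n\le 2^{2-n}u$, i.e.\ $\|z-z_n\|\le 2^{2-n}$. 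Hence $\|x_n-z\|\le\|x_n-z_n\|+\|z_n-z\|\le 2^{1-n}+2^{2-n}\to 0$, so $x_n\to z$.

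Unwinding the reduction, every Cauchy sequence in $V$ converges, so $(V,u)$ is a Banach space. I expect the main obstacle to be precisely the transition from order convergence to norm convergence just described, that is, arranging the perturbation so that the perturbed sequence is monotone with an \emph{existing} supremum \emph{and} the order gap to that supremum is norm-small; the preliminary reduction to the geometrically Cauchy case is exactly what buys the latter.
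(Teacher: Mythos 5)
Your proposal is correct and is essentially the paper's own argument in a slightly different dress: your monotone sequence $z_n=x_n-2^{1-n}u$ coincides, up to the additive constant $2u$ and an index shift, with the paper's partial sums $s_m=\sum_{n=0}^{m}b_n$ of the nonnegative increments $b_n=2^{-n}u+a_{n+1}-a_n$. The key step---bounding $0\le z-z_n\le 2^{2-n}u$ by taking the supremum of the telescoped differences and converting this order bound into a norm bound---is exactly the paper's passage from (5)--(7) to (8).
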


\begin{proof}
Suppose that $V$ is monotone $\sigma$-complete and let $(a\sb{n})
\sb{n=0}\sp{\infty}$ be a Cauchy sequence in $V$. Then there
exists $0<\beta\in\reals$ such that $-\beta u\leq a\sb{n}\leq\beta u$
for all $n=0,1,2,...$, and by replacing each $a\sb{n}$ by $\beta\sp{-1}
a\sb{n}$, we can assume that $-u\leq a\sb{n}\leq u$, i.e., $\|a\sb{n}\|
\leq 1$, for $n=0,1,2,...$. Also, we can and do replace $a\sb{0}$ by $0$
without affecting the hypothesis that $(a\sb{n})\sb{n=0}\sp{\infty}$ is a
Cauchy sequence.

For each $n\in\Nat$, there exists $M\sb{n}\in\Nat$ such that
\[
M\sb{n}\leq i,j\in\Nat\Rightarrow \|a\sb{i}-a\sb{j}\|<2\sp{-n},
\]
and we can assume without loss of generality that
\[
M\sb{1}<M\sb{2}<M\sb{3}<\cdots.
\]
Thus, for $n=1,2,3,...$,
\[
\|a\sb{M\sb{n+1}}-a\sb{M\sb{n}}\|<2\sp{-n}.
\]
It will be sufficient to prove that the subsequence of $(a\sb{n})
\sb{n=0}\sp{\infty}$ given by
\[
a\sb{0},\, a\sb{M\sb{1}},\, a\sb{M\sb{2}},\, a\sb{M\sb{3}}, ...
\]
converges. Replacing $(a\sb{n})\sb{n=0}\sp{\infty}$ by this
subsequence, we have
\[
a\sb{0}=0\text{\ and for all\ }n\in\Nat,\ \|a\sb{n}\|\leq 1
\text{\ and\ }\|a\sb{n+1}-a\sb{n}\|<2\sp{-n}.
\]
Thus,
\setcounter{equation}{0}
\begin{equation} \label{eq:H01}
a\sb{0}=0,\,-u\leq a\sb{n}\leq u\text{\ and\ }-2\sp{-n}u<a\sb{n+1}-a\sb{n}
 <2\sp{-n}u\text{\ for all\ }n\in\Nat.
\end{equation}

Now put
\[
b\sb{n}:=2\sp{-n}u+a\sb{n+1}-a\sb{n}\text{\ for\ }n=0,1,2,3,... .
\]
In particular, $b\sb{0}=u+a\sb{1}$, and since $-u\leq a\sb{1}
\leq u$, we have $0\leq u+a\sb{1}\leq 2u$, whence $0\leq b\sb{0}
\leq 2u$. Also, for $n\in\Nat$, $0\leq b\sb{n}\leq 2(2\sp{-n})u$
and therefore
\begin{equation} \label{eq:H02}
0\leq b\sb{n}\leq 2(2\sp{-n})u\text{\ for\ }n=0,1,2,3,... .
\end{equation}
Consider the partial sums
\begin{equation} \label{eq:H03}
s\sb{m}:=\sum\sb{n=0}\sp{m}b\sb{n}=a\sb{m}+2u-2\sp{-m}u.
\end{equation}
By (\ref{eq:H02}), $(s\sb{m})\sb{m=0}\sp{\infty}$ is monotone
increasing and
\begin{equation} \label{eq:H04}
0\leq s\sb{m}=\sum\sb{n=0}\sp{m}b\sb{n}\leq\sum\sb{n=0}\sp{m}2
(2\sp{-n})u\leq 4u-2\sp{1-m}u\leq 4u\text{\ for\ }m=0,1,2,3,...\ ;
\end{equation}
therefore $s:=\bigvee\sb{m=0}\sp{\infty}s\sb{m}$ exists in $V$.

\newpage

Temporarily fix $m\in\{0,1,2,3,...\}$.  Then
\begin{equation} \label{eq:H05}
\text{for\ }m<p\in\Nat,\ 0\leq s\sb{p}-s\sb{m}=\sum\sb{k=m+1}
 \sp{p}b\sb{k}.
\end{equation}
Thus $(s\sb{p}-s\sb{m})\sb{p=m+1}\sp{\infty}$ is a monotone
increasing sequence in $V$, and by (\ref{eq:H04}),
\[
0\leq s\sb{p}-s\sb{m}\leq s\sb{p}\leq 4u\text{\ for\ }m<p\in\Nat,
\]
whence $\bigvee\sb{p=m+1}\sp{\infty}(s\sb{p}-s\sb{m})$ exists in $V$
for $m=0,1,2,3,...\ .$

\smallskip

Since $(s\sb{n})\sb{n=0}\sp{\infty}$ is monotone increasing, it follows
that
\begin{equation} \label{eq:H06}
s=\bigvee\sb{n=0}\sp{\infty}s\sb{n}=\bigvee\sb{p=m+1}\sp{\infty}s\sb{p}
\text{\ for\ }m=0,1,2,3,...\ ,
\end{equation}
and by (\ref{eq:H06}) and (\ref{eq:H05}), we have
\begin{equation} \label{eq:H07}
0\leq s-s\sb{m}=\left(\bigvee\sb{p=m+1}\sp{\infty}s\sb{p}\right)-s\sb{m}=
\bigvee\sb{p=m+1}\sp{\infty}(s\sb{p}-s\sb{m})=\bigvee\sb{p=m+1}
 \sp{\infty}\left(\sum\sb{k=m+1}\sp{p}b\sb{k}\right).
\end{equation}
By (\ref{eq:H02}), for all $m=0,1,2,3,...$ and all $p>m$,
\[
\sum\sb{k=m+1}\sp{p}b\sb{k}\leq\sum\sb{k=m+1}\sp{p}2\cdot2\sp{-k}u
\leq\sum\sb{k=m+1}\sp{\infty}2\cdot 2\sp{-k}u=2\cdot2\sp{-m}u,
\]
whence
\[
\bigvee\sb{p=m+1}\sp{\infty}\left(\sum\sb{k=m+1}\sp{p}b\sb{k}\right)
 \leq 2\cdot2\sp{-m}u.
\]
Consequently, by (\ref{eq:H07}), $0\leq s-s\sb{m}\leq 2\cdot 2\sp{-m}u$,
and therefore $\|s-s\sb{m}\|\leq 2\cdot 2\sp{-m}$ for $m=0,1,2,...\ $ and
we have
\begin{equation} \label{eq:H08}
s=\lim\sb{m\rightarrow\infty}s\sb{m}.
\end{equation}
Thus by (\ref{eq:H03}) and (\ref{eq:H08}),
\[
\lim\sb{m\rightarrow\infty}a\sb{m}=\lim\sb{m\rightarrow\infty}(s\sb{m}
+2\sp{-m}u-2u)=s-2u. \qedhere
\]
\end{proof}


\begin{thebibliography}{99}

\bibitem{Hand} D. Handelman: Rings with involution as partially ordered aberlian groups, Rocky Mt.J. Math. {\bf 11} (1981) 337--381.


\end{thebibliography}
\end{document}